\newcommand*{\sheafhom}{\mathscr{H}\text{\kern -3pt {\calligra\large om}}\,}
\newcommand{\iso}{\simeq}
\newcommand{\surj}{\twoheadrightarrow}
\newcommand{\inj}[1]{\xhookrightarrow{#1}}
\newcommand{\RR}{\boldsymbol{\mathbb{R}}}
\newcommand{\Rsheaf}{\mathcal{R}}
\theoremstyle{plain}
\newtheorem{theorem}{Theorem}[chapter]
\newtheorem{proposition}[theorem]{Proposition}
\newtheorem{lemma}[theorem]{Lemma}
\theoremstyle{definition}
\newtheorem{definition}[theorem]{Definition}
\newtheorem{example}[theorem]{Example}
\newtheorem{exercise}[theorem]{Exercise}
\numberwithin{equation}{theorem}
\begin{document}

\sloppy

%+Title
\title{\Huge \bf Lecture Notes on the Fundamental Structures of Differential Geometry}
\author{Dmitrii Pedchenko \footnote{UCLA Mathematics Department,  Los Angeles, CA 90095-1555, \newline{} \hspace*{0.6 cm} {\it Email addresses:} dpedchenko@math.ucla.com, dmitry.pedchenko@gmail.com}}
\date{April 3, 2022}
\maketitle

%-Title

%+Contents
\tableofcontents
%+Contents

%+Make Parts
%\part{First Part}
%-Make Parts
\chapter{Introduction}
This is the first part of the lecture notes that grew out of the special course given by the author to the audience of exceptional undergraduate and first and second year graduate students at UCLA interested in geometry during the 2021/2022 academic year. 
In these lecture notes we present an approach to the fundamental structures  in differential geometry that uses the vernacular of sheaves, differential operators and horizontal subbundles. 

This language is now standard in other disciplines, notably algebraic and complex geometry,  but has been slow to enter the textbooks for students aspiring to work in differential geometry or use differential geometry in other geometric disciplines. 

A notable exception is the recent textbook \cite{wedhorn} that uses the language of sheaves and cohomology to introduce the basics of smooth manifolds. This textbook, however, does not tap into the differential geometry territory per se, as it does not discuss metrics, connections, curvature, parallel transport or holonomy groups. 

Another feature of this book is the treatment of the notion of connection from various different viewpoints: as a Koszul connection, as a covariant derivative, as an Ehresmann connection, as a parallel transport, etc. A similar extensive exposition of the notion of connection can be found in \cite{Spivak} and in \cite{postnikov}, but those expositions do not use the modern language that we try to develop in these notes. 

A byproduct of the chosen approach is a relatively simple and conceptually clear construction of canonical connections (Levi-Civita, Chern, Bismut, etc.) without a heavy reliance on explicit local expressions as a starting point.

Finally, these lecture notes were heavily inspired by the course \cite{verbit} in differential geometry given by Misha Verbitsky in 2013. 

\subsubsection*{Prerequisites}
This book assumes familiarity with the language of sheaves and the basics of smooth manifolds on the level of first year graduate courses. 

Both of these topics are covered in the already mentioned \cite{wedhorn}. For a more extensive modern treatment of the basics of smooth manifolds see the wonderful textbook \cite{JLee}. Other references include \cite{JohnLee}, \cite{postnikov} or the classic \cite{kobayashi1996foundations}. 

Familiarity with scheme theory and complex geometry will also be very helpful.

\subsubsection*{Acknowledgments}
First, we would like to thank Misha Verbitsky for a wonderful and broad education in geometry. The author took $5$ geometric courses and sat on numerous seminars organized by Misha while being an undergraduate student in the National Research University Higher School of Economics.

Second, I want to thank my scientific advisors Jack Huizenga and Alexei Pirkovskii for the ongoing support and interest in our work.

Last but not least, these lecture notes were prepared during the ongoing Russian invasion of Ukraine. Our heart is with those who had to bear the unnecessary and intolerable suffering one encounters during the war time. We also thank UCLA for the accommodations provided to the author during this rough period of life.

\chapter{Differential Operators}

\section{Differential operators over a ring}

\begin{definition}
A {\bf commutator} of two linear operators $A,B: V \to V$ is a linear operator defined as$$[A,B]:= A \circ B-B \circ A.$$ 
\end{definition}

The next key definition is due to A. Grothendieck.

\begin{definition}\label{def diff op} Let $R$ be a (commutative) $k$-algebra. A {\bf differential operator of order} $\mathbf{0}$ is a morphism of $R$-modules $D^0:R \to R$. Denote differential operators of order $0$ by $Diff^0(R)$. 

More generally, a $k$-linear morphism $D^i: R \to R$ is a {\bf differential operator of order} $\mathbf{i}$, $D^i \in Diff^i(R)$, if for any $D^0 \in Diff^0(R)$ the commutator $[D^i, D^0]$ lies in $Diff^{i-1}(R)$. 

We have $$Diff^0(R) \subset Diff^1(R) \subset Diff^2(R) \subset ...,$$
and the union $Diff^{\bullet}(R):=\cup_{i\geq0}Diff^i(R)$  is called the {\bf algebra of differential operators on} $\mathbf{R}$. 
\end{definition}

\begin{example}[Differential operators of order $0$]
It follows from the above definition that $$Diff^0(R) \iso R$$ via $D^0 \mapsto D^0(1)$. The inverse morphism is given by $a \mapsto (m_a: b \mapsto ab).$ We will consistently use this identification throughout this chapter. 
\end{example}

\begin{example}[Differential operators of order $1$] \label{derivations}
Let us show that $$Diff^1(R) \iso Der(R) \oplus R$$ as $k$-vector spaces. First note that for any derivation $X \in Der(R)$, any $m_a \in Diff^0(R)$ and any $b \in R$ we have $$[X, m_a](b) = X(ab) - aX(b) = aX(b) + bX(a) - aX(b) = X(a)b,$$ so we see that $[X, m_a]$ is just $m_{X(a)} \in Diff^0(R)$. Thus $Der(R) \subset Diff^1(R).$

Now consider $D^1 \in Diff^1(R)$. Define $$X(a) := D^1(a) - D^1(1) a, \quad \forall a \in R.$$

\begin{exercise}
Check that $X$ defined above is a derivation. 
\end{exercise}

This exercise shows that any $D^1 \in Diff^1(R)$ can be written as $$D_1 = X + m_{D^1(1)}.$$
\end{example}

Note that if $D^i \in Diff^i(R)$ and $D^j \in Diff^j(R)$, then for any $m_a \in Diff^0(R)$:
\begin{align*} Diff^{i+j-1} &\ni [m_a, D^i]\circ D^j + D^i \circ [m_a, D^j] 
 =m_a \circ D^i \circ D^j - D^i \circ m_a \circ D^j \\& + D^i \circ m_a \circ D^j - D^i \circ D^j \circ m_a = m_a \circ D^i \circ D^j - D^i \circ D^j \circ m_a = [m_a, D^i \circ D^j] \
\end{align*} 
which shows that $D^i \circ D^j \in Diff^{i+j}(R)$. This way, we get a {\bf filtered algebra} $\mathbf{Diff^{\bullet}(R)}$ of differential operators.

\begin{exercise}\label{ideals}
Let $D \in Diff^k(R)$. 
\begin{enumerate}[(a)]
\item Prove that $D(a_1a_2...a_k) = D'(a_2a_3...a_k)+a_1D(a_2a_3...a_k),$ where $D'=[D, m_{a_1}] \in Diff^{k-1}(R)$. 

\item Let $I \subset R$ be an ideal. Use induction to prove $D(I^{k+1}) \subset I.$
\end{enumerate}
\end{exercise}

\begin{exercise}\label{ass gr comm}
Use induction and the Jacobi identity $$[A, [B,C]]=[[A, B], C]+[B, [A,C]], \quad A,B,C \ - \text{linear operators},$$to prove that if $D^i \in Diff^i(R), D^j\in Diff^j(R)$, then the commutator $[D^i, D^j]$ lies in $Diff^{i+j-1}(R)$.
\end{exercise}

\subsection{Differential operators over the polynomial ring}

For this subsection, let $R:=\RR[t_1, ..., t_n]$.

\begin{lemma}\label{extension by zero}
If $D \in Diff^k(R)$ is a differential operator that vanishes on all polynomials of degree $\leq k$, then $D=0$.   
\end{lemma}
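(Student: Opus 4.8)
The plan is to induct on the order $k$ of the differential operator $D$. The base case $k=0$ is immediate: under the identification $Diff^0(R) \iso R$ we have $D = m_a$ for some $a \in R$, and the hypothesis that $D$ kills the degree-zero polynomial $1$ forces $a = D(1) = 0$, so $D = 0$.

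For the inductive step, suppose the lemma holds for all operators of order $\leq k-1$, and let $D \in Diff^k(R)$ vanish on every polynomial of degree $\leq k$. For each variable $t_i$ form the commutator $D_i := [D, m_{t_i}]$, which by Definition~\ref{def diff op} lies in $Diff^{k-1}(R)$. The heart of the argument is the observation that $D_i$ vanishes on polynomials of degree $\leq k-1$: indeed, for such an $f$ we have $D_i(f) = D(t_i f) - t_i D(f)$, and both terms vanish because $\deg(t_i f) \leq k$ and $\deg f \leq k$. By the induction hypothesis applied to $D_i \in Diff^{k-1}(R)$ we conclude $D_i = 0$, i.e.\ $[D, m_{t_i}] = 0$ for every $i$.

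Next I would upgrade commuting-with-the-generators to full $R$-linearity of $D$. The assignment $a \mapsto [D, m_a]$ obeys the Leibniz rule $[D, m_{ab}] = [D, m_a]\, m_b + m_a\, [D, m_b]$, which follows from $m_a m_b = m_{ab}$ together with the derivation identity $[D, A\circ B] = [D,A]\circ B + A\circ[D,B]$ for commutators (the same identity underlying the displayed computation preceding Exercise~\ref{ideals}). Since $D$ is $\RR$-linear it automatically commutes with $m_c$ for scalars $c \in \RR$, and since the $t_i$ generate $R$ as an $\RR$-algebra, the vanishing of every $[D, m_{t_i}]$ propagates through this Leibniz rule to give $[D, m_a] = 0$ for all $a \in R$. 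Thus $D$ is $R$-linear, whence $D(f) = D(m_f(1)) = f\, D(1)$ for every $f$; as $D(1) = 0$ we conclude $D = 0$.

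I expect the only delicate point to be the last step, namely the careful verification that commuting with each generator $m_{t_i}$ forces commuting with every $m_a$. An alternative that sidesteps the $R$-linearity packaging is a direct induction on $\deg f$: writing a monomial of degree $d+1 > k$ as $t_i t^\beta$ and invoking Exercise~\ref{ideals}(a) gives $D(t_i t^\beta) = D_i(t^\beta) + t_i D(t^\beta)$, where the first term vanishes since $D_i = 0$ and the second by the degree induction. Either route ultimately reduces to the single clean commutator estimate $D_i(f) = D(t_i f) - t_i D(f)$ established above.
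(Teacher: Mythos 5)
Your proof is correct. The key step is exactly the paper's: the commutator $D_i=[D,m_{t_i}]$ has order $k-1$ and vanishes on polynomials of degree $\leq k-1$ because $D_i(f)=D(t_if)-t_iD(f)$ and both terms die by hypothesis, so the induction on order kills $D_i$. Where you diverge is in how you finish. The paper runs a second induction on the degree of a monomial, using the identity of Exercise~\ref{ideals}(a) in the form $D(t_it^\beta)=D_i(t^\beta)+t_iD(t^\beta)$ to push the vanishing from degree $k$ up through all degrees --- this is precisely the ``alternative'' you sketch in your last paragraph. Your primary route instead upgrades $[D,m_{t_i}]=0$ to $[D,m_a]=0$ for all $a\in R$ via the Leibniz rule for $a\mapsto[D,m_a]$, concluding that $D$ is $R$-linear and hence equals $m_{D(1)}=0$. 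That step is sound: the set of $a$ with $[D,m_a]=0$ is an $\RR$-subalgebra of $R$ containing the generators $t_i$, hence is all of $R=\RR[t_1,\dots,t_n]$. Your packaging is conceptually cleaner --- it exhibits the conclusion as ``$D$ is actually of order $0$ and kills $1$'' --- at the price of the small extra verification about generators; the paper's monomial-by-monomial induction avoids that verification but leaves the degree induction for $|I|>k+1$ somewhat implicit. Either way the content is the same single commutator estimate, and both arguments are complete.
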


\begin{proof} Note that if $t_1^{i_1}\cdot ... \cdot t_n^{i_n}$ is a monomial of degree $|I|=i_1+...+i_n = k+1$, then by Exercise \ref{ideals} (a) $$D(t_1^{i_1}\cdot ... \cdot t_n^{i_n}) = D'(t^{|I|-1}) + t_iD(t^{|I|-1}), \quad D'=[D, m_{t_i}] \in Diff^{k-1}(R), \quad |I|-1=k.$$
The first term is zero by induction on the order of a differential operator and the fact that $D'$ vanishes on polynomials of degree $\leq k-1$. The second term is zero by the assumption of the lemma. 

The inductive step for monomials $t_1^{i_1}\cdot ... \cdot t_n^{i_n}$ of degree $|I|=i_1+...+i_n > k+1$ is handled using the same formula.
\end{proof}

\begin{lemma}\label{restriction}
Let $R_{\leq k} =:  \RR[t_1, ..., t_n]_{\leq k}$ be the space of polynomials of degree $\leq k$. Let $A: R_{\leq k} \to R$ be a linear operator. Then there exists $D \in Diff^k(R)$ such that $D|_{R_{\leq k}}: R_{\leq k} \to R$ coincides with $A$. 
\end{lemma}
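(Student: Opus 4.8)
The plan is to write the desired extension explicitly as a combination of partial derivatives and to solve for its coefficients by a triangular recursion on the degree. Throughout, for a multi-index $\alpha=(\alpha_1,\dots,\alpha_n)$ I write $t^\alpha=t_1^{\alpha_1}\cdots t_n^{\alpha_n}$, $\partial^\alpha=\partial_{t_1}^{\alpha_1}\cdots\partial_{t_n}^{\alpha_n}$, $|\alpha|=\alpha_1+\cdots+\alpha_n$ and $\alpha!=\alpha_1!\cdots\alpha_n!$. First I would record that every $\partial^\alpha$ with $|\alpha|\le k$ lies in $Diff^k(R)$: each $\partial_{t_i}$ is a derivation, hence belongs to $Der(R)\subset Diff^1(R)$ by Example \ref{derivations}, and by the filtered-algebra structure established above their composition $\partial^\alpha$ lies in $Diff^{|\alpha|}(R)\subseteq Diff^k(R)$. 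Consequently, for any coefficients $f_\alpha\in R$ the operator $D:=\sum_{|\alpha|\le k} m_{f_\alpha}\circ\partial^\alpha$ is again of order $\le k$, since $m_{f_\alpha}\in Diff^0(R)$, composition raises order additively, and a finite sum of order-$\le k$ operators has order $\le k$. So it suffices to choose the $f_\alpha$ so that $D$ agrees with $A$ on the monomial basis $\{t^\gamma:|\gamma|\le k\}$ of $R_{\le k}$.

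The key computation is that $\partial^\alpha(t^\gamma)=\tfrac{\gamma!}{(\gamma-\alpha)!}\,t^{\gamma-\alpha}$ when $\alpha\le\gamma$ componentwise and $0$ otherwise, so that
$$D(t^\gamma)=\sum_{\alpha\le\gamma} f_\alpha\,\frac{\gamma!}{(\gamma-\alpha)!}\,t^{\gamma-\alpha}=\gamma!\,f_\gamma+\sum_{\alpha\lneq\gamma} f_\alpha\,\frac{\gamma!}{(\gamma-\alpha)!}\,t^{\gamma-\alpha}.$$
Imposing $D(t^\gamma)=A(t^\gamma)$ then expresses $f_\gamma$ through the data $A(t^\gamma)$ and the coefficients $f_\alpha$ of strictly lower degree:
$$f_\gamma=\frac{1}{\gamma!}\,A(t^\gamma)-\sum_{\alpha\lneq\gamma}\frac{1}{(\gamma-\alpha)!}\,f_\alpha\,t^{\gamma-\alpha}.$$
I would read this as a recursion on the total degree $|\gamma|$: the system is triangular, every $f_\alpha$ occurring on the right has $|\alpha|<|\gamma|$, so the $f_\gamma$ for $|\gamma|\le k$ are defined unambiguously by induction, starting from $f_0=A(1)$. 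With these coefficients $D$ and $A$ agree on every monomial of degree $\le k$, hence on all of $R_{\le k}$ by linearity, which is the assertion.

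The one point requiring care—the mild analogue of the main obstacle here—is the solvability of the recursion: it relies on the diagonal coefficient $\gamma!$ being invertible, which holds precisely because $\RR$ has characteristic zero. (Over a field of positive characteristic the statement can fail, so the hypothesis on the ground field is genuinely used.) I would also remark that the extension $D$ is in fact unique: the difference of two extensions lies in $Diff^k(R)$ and vanishes on $R_{\le k}$, hence is $0$ by Lemma \ref{extension by zero}; this is not needed for existence, but together with that lemma it shows the restriction map $Diff^k(R)\to\mathrm{Hom}_k(R_{\le k},R)$ is an isomorphism.
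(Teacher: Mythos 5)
Your proof is correct, and it follows the same basic strategy as the paper's: build the extension explicitly as an $R$-linear combination $\sum_{|\alpha|\le k} f_\alpha\,\partial^\alpha$ of compositions of the derivations $\tfrac{d}{dt_i}$, and choose the coefficients so that the operator matches $A$ on the monomial basis of $R_{\le k}$. The difference lies in how the coefficients are chosen, and here your version is the more careful one. The paper takes $f_\alpha = A(t^\alpha)/\alpha!$ outright, relying on the observation that $D_{f,\alpha}$ kills all monomials of degree $<|\alpha|$ and all monomials of degree $|\alpha|$ other than $t^\alpha$; but that observation says nothing about monomials of degree strictly greater than $|\alpha|$, on which $D_{f,\alpha}$ does not vanish (already for $n=1$, $k=1$ the paper's operator sends $t$ to $A(1)\,t + A(t)$ rather than to $A(t)$). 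Your triangular recursion $f_\gamma=\frac{1}{\gamma!}A(t^\gamma)-\sum_{\alpha\lneq\gamma}\frac{1}{(\gamma-\alpha)!}f_\alpha\,t^{\gamma-\alpha}$ is exactly the correction needed to absorb these lower-degree contributions, so your argument closes a genuine gap in the paper's proof rather than merely rephrasing it. (An equivalent fix is to induct on $k$: extend $A|_{R_{\le k-1}}$ to some $D'\in Diff^{k-1}(R)$ first, then add $\sum_{|\alpha|=k} D_{(A-D')(t^\alpha),\alpha}$, which vanishes on all of $R_{\le k-1}$.) Your uniqueness remark via Lemma \ref{extension by zero} is also correct. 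One small caveat: what characteristic zero buys you is that this particular ansatz, using compositions of honest derivations, succeeds; in positive characteristic one would instead use divided-power operators, which are still differential operators in Grothendieck's sense, so your parenthetical claim that the lemma itself fails there is not something you have established --- but this is a side remark that does not affect the proof over $\RR$.
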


\begin{proof}
Consider a differential operator $D_{f,i_1...i_n}=\frac{f(t_1, ..., t_n)}{i_1!\cdot ... \cdot i_n!}\left( \frac{d}{dt_1}\right)^{\circ i_1} \circ ... \circ \left( \frac{d}{dt_n} \right)^{\circ i_n}$ for $f \in R$. Note that $D_{f,i_1...i_n}$ is zero on all monomials of degree $<|I|=i_1+...+i_n$, $$D_{f,i_1 ... i_n} (t_1^{i_1}...t_n^{i_n}) =  f(t_1, ..., t_n),$$ and is zero on all other monomials of degree $|I|$. 

Thus $$D = \sum_{i_1+...+i_n \leq k}  D_{A(t_1^{i_n}...t_n^{i_n}),i_1...i_n}$$is the desired differential operator as monomials $A(t_1^{i_n}...t_n^{i_n})$ with $i_1+...+i_n \leq k$ form the basis of $R_{\leq k}$. 
\end{proof}

\begin{proposition}\label{poly generated}
The algebra of differential operators $Diff^{\bullet} \left(\RR[t_1, ..., t_n] \right)$ is generated by $t_i$ and $\frac{d}{dt_j}$ for $i,j=1,...,n$.
\end{proposition}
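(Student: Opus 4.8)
The plan is to show that an arbitrary differential operator $D \in Diff^k(R)$ lies in the subalgebra $S$ generated by the $t_i$ and $\frac{d}{dt_j}$ by reducing $D$ to an operator that vanishes on low-degree polynomials and then invoking Lemma \ref{extension by zero}. The whole argument rests on the two preceding lemmas: Lemma \ref{restriction} tells us that an operator is determined by quite a lot of freedom on $R_{\leq k}$, while Lemma \ref{extension by zero} tells us that an operator vanishing on $R_{\leq k}$ is identically zero. The key observation is that the operators $D_{f,i_1\dots i_n}$ appearing in the proof of Lemma \ref{restriction} already belong to $S$: the factor $f \in R$ is a polynomial, hence an $\RR$-linear combination of products of the $m_{t_i}$, and each $\left(\frac{d}{dt_j}\right)^{\circ i_j}$ is a composite of the generator $\frac{d}{dt_j}$. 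Thus every operator built in Lemma \ref{restriction} lies in $S$.

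First I would take $D \in Diff^k(R)$ arbitrary and consider its restriction $A := D|_{R_{\leq k}}$. By Lemma \ref{restriction}, there is an operator $\tilde D \in Diff^k(R)$, explicitly of the form $\sum_{i_1+\dots+i_n \leq k} D_{A(t^I),\,i_1\dots i_n}$, which agrees with $D$ on all of $R_{\leq k}$ and which, by the previous paragraph, lies in $S$. Then I would form the difference $D - \tilde D$. Since both $D$ and $\tilde D$ are differential operators of order $k$ (the filtration is a $k$-vector subspace structure on each level), their difference $D - \tilde D \in Diff^k(R)$ as well, and by construction $D - \tilde D$ vanishes on every polynomial of degree $\leq k$.

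Next I would apply Lemma \ref{extension by zero} to $D - \tilde D$, which forces $D - \tilde D = 0$, i.e. $D = \tilde D \in S$. Since $D$ was an arbitrary element of $Diff^k(R)$ and $k$ was arbitrary, and $Diff^{\bullet}(R) = \bigcup_{k \geq 0} Diff^k(R)$, this shows $Diff^{\bullet}(R) = S$, proving the proposition. The reverse inclusion $S \subseteq Diff^{\bullet}(R)$ is essentially already established in the earlier examples: each $m_{t_i} \in Diff^0(R)$ and each $\frac{d}{dt_j} \in Der(R) \subset Diff^1(R)$, and $Diff^{\bullet}(R)$ is a subalgebra closed under composition, so all products of generators lie in it.

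The only genuinely delicate point, and the step I expect to be the main obstacle, is verifying carefully that the building blocks $D_{f,i_1\dots i_n}$ actually lie in the subalgebra generated by the stated generators rather than merely being differential operators of the correct order. One must confirm that multiplication by the polynomial $f$ is realized as composition with the zeroth-order operators $m_{t_i}$ (using the identification $Diff^0(R) \iso R$ from the first example) and that the mixed partials compose correctly with these multiplication operators inside $S$; this is routine but is the crux on which the entire reduction depends.
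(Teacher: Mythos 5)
Your proposal is correct and follows essentially the same route as the paper's own proof: restrict $D$ to $R_{\leq k}$, use Lemma \ref{restriction} to produce $\tilde{D}$ in the subalgebra agreeing with $D$ there, and conclude $D = \tilde{D}$ via Lemma \ref{extension by zero}. Your extra care in checking that the building blocks $D_{f,i_1\dots i_n}$ genuinely lie in the subalgebra generated by $t_i$ and $\frac{d}{dt_j}$ is a point the paper leaves implicit, and it is a worthwhile addition.
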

\begin{proof}
Let $D$ be a differential operator of order $k$. Consider $D|_{R_{\leq k}}: R_{\leq k} \to R$. By Lemma \ref{restriction}, there is $\tilde{D}\in Diff^k(R)$ that lies in the subalgebra generated by $t_i$ and $\frac{d}{dt_j}$ and that satisfies $D|_{R_{\leq k}} = \tilde{D}|_{R_{\leq k}}$. The differential operator $D - \tilde{D} \in Diff^k(R)$ vanishes on all polynomials of degree $\leq k$ by construction and therefore, by Lemma \ref{extension by zero}, $D = \tilde{D}$. 
\end{proof}

\begin{exercise}
Prove that $Diff^{\bullet}(R)$ is an $\RR$-algebra generated by $t_1, ..., t_n, \frac{d}{dt_1}, ..., \frac{d}{dt_n}$ with relations 
$$[t_i, t_j]=0, \quad \left[\frac{d}{dt_i}, \frac{d}{dt_j}\right]=0, \quad \forall i,j,$$
$$\left[t_i, \frac{d}{dt_i} \right]=1,$$
$$\left[ \frac{d}{dt_i}, t_j \right] = 0, \quad i \neq j.$$
\end{exercise}

\subsection{Differential operators over the ring of smooth functions}

In this subsection, we consider differential operators over the ring $C^{\infty}(\RR^n)$ of smooth function on $\RR^n$. We denote by $x_i$ the standard coordinate functions $x_i: (p_1,...p_n) \mapsto p_i.$

Let $m_x = \{ f \in R \ | f(x) =0 \}$ be the maximal ideal of smooth functions vanishing at point $x$. By Exercise \ref{ideals} (b), for any differential operator of order $i$ and $f \in m_x^{i+1}$ we have $D(f) =0$. This observation allows us to prove the following key lemma.

\begin{lemma}\label{lemma vanish}
Any differential operator $D \in Diff^i(C^{\infty}(\RR^n))$ that vanishes on polynomials of degree $\leq i$ is trivial. 
\end{lemma}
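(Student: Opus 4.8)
The plan is to prove the statement pointwise: it suffices to show that $D(g)(x) = 0$ for every $g \in C^{\infty}(\RR^n)$ and every $x \in \RR^n$, since a smooth function vanishing at all points is identically zero. This is the smooth analogue of Lemma \ref{extension by zero}, but the inductive argument used there cannot be transplanted directly: smooth functions are not generated from monomials by the algebra operations, so there is no ``top degree'' to induct on. I would instead replace the monomial induction by a local Taylor expansion, using the vanishing ideal $m_x$ as the bookkeeping device supplied by Exercise \ref{ideals}(b).

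Fix $x \in \RR^n$ and a test function $g$. Let $P$ be the Taylor polynomial of $g$ at $x$ of degree $\le i$; expanded in the coordinates $x_1,\dots,x_n$ this is genuinely a polynomial of degree $\le i$. Set $h := g - P$. The key claim is that $h \in m_x^{i+1}$. This follows from Taylor's theorem with integral remainder, which on the convex domain $\RR^n$ is valid globally:
$$h(y) = \sum_{|\alpha| = i+1} \frac{i+1}{\alpha!}\,(y-x)^{\alpha}\int_{0}^{1}(1-s)^{i}\,(\partial^{\alpha} g)\big(x + s(y-x)\big)\,ds,$$
where $\alpha$ ranges over multi-indices. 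Each coefficient integral is a smooth function of $y$ (differentiation under the integral sign), and each factor $(y-x)^{\alpha}$ with $|\alpha| = i+1$ is a product of $i+1$ elements of $m_x$; hence every summand, and therefore $h$ itself, lies in $m_x^{i+1}$.

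With the decomposition $g = P + h$ in hand, the conclusion is immediate from the $\RR$-linearity of $D$. Indeed $D(P) = 0$ everywhere by the hypothesis of the lemma, so in particular $D(P)(x) = 0$; and since $h \in m_x^{i+1}$ and $D \in Diff^{i}(C^{\infty}(\RR^n))$, Exercise \ref{ideals}(b) (with the ideal $I = m_x$) gives $D(h) \in m_x$, i.e. $D(h)(x) = 0$. Therefore $D(g)(x) = D(P)(x) + D(h)(x) = 0$. As $x$ was arbitrary we get $D(g) \equiv 0$, and as $g$ was arbitrary, $D = 0$.

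The step I expect to be the real content is the membership $h \in m_x^{i+1}$: one must produce the smooth Taylor remainder, verify that the remainder coefficients are smooth on all of $\RR^n$ (this is where convexity of $\RR^n$ and smoothness of $g$ enter), and confirm that the remainder genuinely sits in the $(i+1)$-st power of the maximal ideal rather than merely vanishing to order $i+1$ at $x$. Everything after that --- applying the hypothesis to $P$ and Exercise \ref{ideals}(b) to $h$ --- is a formal two-line computation.
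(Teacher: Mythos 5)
Your proof is correct and follows essentially the same route as the paper's: fix a point, subtract the degree-$i$ Taylor polynomial, place the remainder in $m_x^{i+1}$, and apply Exercise \ref{ideals}(b). The only difference is that you justify the membership $g - P \in m_x^{i+1}$ explicitly via the integral form of the remainder, a step the paper asserts without proof.
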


\begin{proof}
Consider $D \in Diff^i(C^{\infty}(\RR^n))$ that vanishes on $\RR[x_1, ..., x_n]_{\leq i}$. Take any $f \in C^{\infty}(\RR^n)$ and any point $x \in \RR^n$. Take the Taylor polynomial $P$ for function $f$ of degree $i$ at point $x$. We have $$f-P \in m_x^{i+1}.$$ By the observation above and the assumption that $D$ vanishes on $\RR[x_1, ..., x_n]_{\leq i}$ we conclude $$D(f-P) \in m_x \quad \implies \quad D(f-P) =0 \quad \implies \quad D(f) = D(P) + D(f-P) =0 .$$
\end{proof}

Next we prove an analogue of Proposition \ref{poly generated} for $Diff^{\bullet}(C^{\infty}(\RR^n)$. 
\begin{proposition}\label{classic}
The algebra $Diff^{\bullet}(C^{\infty}(\RR^n))$ is generated by $\frac{d}{dx_i}$ as a $C^{\infty}(\RR^n)$-algebra. 
\end{proposition}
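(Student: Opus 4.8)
The plan is to imitate the proof of Proposition \ref{poly generated}, replacing the polynomial ``extension by zero'' Lemma \ref{extension by zero} with its smooth counterpart Lemma \ref{lemma vanish}. Let $\mathcal{A} \subset Diff^\bullet(C^\infty(\RR^n))$ denote the $C^\infty(\RR^n)$-subalgebra generated by the operators $\frac{d}{dx_1}, \ldots, \frac{d}{dx_n}$; concretely $\mathcal{A}$ contains every multiplication operator $m_f$ (the degree-$0$ operators, via $Diff^0 \iso C^\infty(\RR^n)$) and is closed under composition and $\RR$-linear combinations. Given an arbitrary $D \in Diff^i(C^\infty(\RR^n))$, I want to produce $\tilde{D} \in \mathcal{A}$ of order $\le i$ that agrees with $D$ on the finite-dimensional space $\RR[x_1,\ldots,x_n]_{\le i}$ of polynomials of degree $\le i$. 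Granting this, the difference $D - \tilde{D}$ is a differential operator of order $\le i$ vanishing on all polynomials of degree $\le i$, so Lemma \ref{lemma vanish} forces $D - \tilde{D} = 0$, i.e.\ $D = \tilde{D} \in \mathcal{A}$, which is exactly the assertion.

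The construction of $\tilde{D}$ is the smooth analogue of Lemma \ref{restriction}. For a multi-index $I = (i_1,\ldots,i_n)$ write $\partial^I = (\frac{d}{dx_1})^{\circ i_1}\circ\cdots\circ(\frac{d}{dx_n})^{\circ i_n}$ and, for $f \in C^\infty(\RR^n)$, set $D_{f,I} = m_f \circ \frac{1}{I!}\partial^I$, where $I! = i_1!\cdots i_n!$. Since $\partial^I$ is a composition of the generators and $m_f$ lies in $Diff^0$, each $D_{f,I}$ belongs to $\mathcal{A}$ and has order $\le |I|$. I would then look for $\tilde{D} = \sum_{|I|\le i} D_{f_I, I}$ with smooth coefficients $f_I$ chosen so that $\tilde{D}(x^J) = D(x^J)$ for every monomial $x^J$ with $|J|\le i$. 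The essential feature is the triangular action $\partial^I(x^J) \ne 0 \iff I \le J$ componentwise, with $\partial^I(x^J) = \frac{J!}{(J-I)!}x^{J-I}$ in that case; this gives $\tilde{D}(x^J) = \sum_{I\le J}\binom{J}{I} f_I\, x^{J-I}$, where $\binom{J}{I} = \prod_m \binom{J_m}{I_m}$. Solving by induction on $|J|$, one isolates $f_J = D(x^J) - \sum_{I<J}\binom{J}{I}f_I\,x^{J-I}$, so each $f_J$ is a smooth function determined by the $f_I$ with $|I| < |J|$; all $f_I$ are thus smooth and $\tilde{D} \in \mathcal{A}$ has order $\le i$.

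The step I expect to carry the real weight is the appeal to Lemma \ref{lemma vanish}, since the rest is bookkeeping that mirrors the polynomial case. In the polynomial setting, agreement on degree $\le i$ forces global agreement essentially by a monomial-degree induction (Lemma \ref{extension by zero}); over $C^\infty(\RR^n)$ there is no such algebraic handle, and the passage from ``vanishing on polynomials of degree $\le i$'' to ``vanishing on all smooth functions'' genuinely relies on the analytic input behind Lemma \ref{lemma vanish} --- Taylor's theorem, which writes any $f$ as its degree-$i$ Taylor polynomial at a point $x$ plus a remainder in $m_x^{i+1}$, together with the containment $D(m_x^{i+1}) \subset m_x$ supplied by Exercise \ref{ideals}(b). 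A secondary point to verify carefully is that the triangular solve really reproduces $D$ on \emph{all} of $\RR[x_1,\ldots,x_n]_{\le i}$ and not merely on the top-degree monomials: the lower-order operators $D_{f_I,I}$ with $|I| < |J|$ do contribute to $\tilde{D}(x^J)$, and it is precisely the inductive determination of the $f_I$ above that absorbs these cross terms. Once both points are in place, the conclusion $D = \tilde{D} \in \mathcal{A}$ is immediate.
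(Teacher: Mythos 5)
Your proof is correct and follows essentially the same route as the paper: build $\tilde{D}$ in the $C^{\infty}(\RR^n)$-subalgebra generated by the $\frac{\partial}{\partial x_j}$ so that it matches $D$ on $\RR[x_1,\ldots,x_n]_{\leq i}$, then apply Lemma \ref{lemma vanish} to the difference $D-\tilde{D}$. If anything, your inductive (triangular) determination of the coefficients $f_J$ is more careful than the paper's interpolation step (the smooth analogue of Lemma \ref{restriction}), which takes $f_I = D(x^I)$ directly and glosses over the cross-term contributions $D_{f_I,I}(x^J)$ for $|I|<|J|$ that your recursion correctly absorbs.
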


\begin{proof} Similar to the proof of Lemma \ref{restriction}, given an $\RR$-linear map $$A: \RR[x_1, ..., x_n]_{\leq k} \to C^{\infty}(\RR^n)$$ we can explicitly construct a differential operator $$D \in C^{\infty}(\RR^n)\left<\frac{\partial}{\partial x_1}, ..., \frac{\partial}{\partial x_n}\right> \subset Diff^k(C^{\infty}(\RR^n))$$ whose restriction to $\RR[x_1, ..., x_n]_{\leq k}$ coincides with $A$ as a sum of differential operators
$$D = \sum_{i_1+...+i_n \leq k}  D_{A(x_1^{i_n}...x_n^{i_n}),i_1...i_n} \ ,$$
$$D_{f,i_1...i_n}=\frac{f(x_1, ..., x_n)}{i_1!\cdot ... \cdot i_n!}\left( \frac{\partial}{\partial x_1}\right)^{\circ i_1} \circ ... \circ \left( \frac{\partial}{\partial x_n} \right)^{\circ i_n}, \quad f \in C^{\infty}(\RR^n).$$

Now consider an arbitrary differential operator $D$ of order $k$. Consider the restriction $$D|_{\RR[x_1, ..., x_n]_{\leq k}} : \RR[x_1, ..., x_n]_{\leq k} \to C^{\infty}(\RR^n).$$ As above, construct a differential operator $\tilde{D}$ of order $i$ as a sum of $D_{f, i_1,..., i_n}$ with the property $D=\tilde{D}$ on $\RR[x_1, ..., x_n]_{\leq k}$. 

Consider $\overline{D} = D - \tilde{D} \in Diff^k(C^{\infty}(\RR^n)).$ By construction $\overline{D}$ vanishes on all polynomials of order $\leq k$. By Lemma \ref{lemma vanish} $\overline{D} = 0$ or, equivalently $$D = \tilde{D} \in C^{\infty}(\RR^n)\left<\frac{\partial}{\partial x_1}, ..., \frac{\partial}{\partial x_n}\right> \subset Diff^k(C^{\infty}(\RR^n)).$$
\end{proof}

This way,  in the case $R = C^{\infty}(\RR^n)$ Proposition \ref{classic} allows to relate Definition \ref{def diff op} to the historic analytical definition of a differential operator $D \in Diff^k(C^{\infty}(\RR^n))$ as an expression $$D= \sum_{i_1+...+i_n \leq k}  f_{i_1, ..., i_n}(x_1, ..., x_n) \frac{\partial^{i_1+...+i_n}}{\partial x_1^{i_1}... x_n^{i_n}}, \quad f_{i_1, ..., i_n}(x_1, ..., x_n) \in C^{\infty}(\RR^n).$$

The key Definition \ref{def diff op} can be sheafified if one replaces a ring $R$ with a \emph{sheaf} of algebras $\mathcal{R}$ on a topological space $X$. 

\begin{definition}\label{sheaf def} Let $\mathcal{R}$ be a sheaf of (commutative) $k$-algebras on a topological space $X$. A {\bf sheaf of differential operators on} $\bm{\mathcal{R}}$ is a functor
$$\{U \subset X \ \text{open} \}^{op} \to \text{Filtered $k$-algebras},$$
$$U \mapsto Diff^{\bullet}(\mathcal{R}(U)),$$
where $Diff^{\bullet}(\mathcal{R}(U))$ is the filtered algebra of differential operators over $\mathcal{R}(U)$ in the sense of Definition \ref{def diff op}. One readily checks that this assignment satisfies the sheaf axioms. 

We will denote this sheaf by $Diff^{\bullet}_{\mathcal{R}}$ or simply by $Diff^{\bullet}$ if the underlying sheaf $\mathcal{R}$ is clear from the context. Observe that after forgetting the filtration the sheaf of differential operators is a subsheaf of the sheaf of internal homomorphisms 
$$ Diff^{\bullet}_{\mathcal{R}} \subset \sheafhom_{k}(\mathcal{R}, \mathcal{R}),$$ where homomorphisms are taken with respect to the $k$-vector space structure of $\mathcal{R}$.

Finally, if $D \in Diff^i_{\mathcal{R}}(U) = Diff^i(\mathcal{R}(U))$, then we say that $D$ is a {\bf differential operator of order $\bm{i}$ on $\bm{U}$}. \end{definition}

\section{Algebra of symbols of differential operators over a ring}\label{symbols section}

Given a $k$-algebra $R$ or a sheaf of $k$-algebras $\Rsheaf$, consider the filtered algebra of differential operators $$Diff^0(R) \subset Diff^1(R) \subset Diff^2(R) \subset ... \subset Diff^{\bullet}(R)$$and the filtered sheaf of algebras of differential operators $$Diff^0_{\Rsheaf} \subset Diff^1_{\Rsheaf} \subset Diff^2_{\Rsheaf} \subset ... \subset Diff^{\bullet}_{\Rsheaf}.$$

\begin{proposition}\label{commutative}
The associated graded algebra $\oplus_{i \geq 0} Diff^i(R)/Diff^{i-1}(R)$ and the associated graded sheaf of algebras $\oplus_{i \geq 0} Diff^i_{\Rsheaf}/Diff^{i-1}_{\Rsheaf}$ are commutative. 
\end{proposition}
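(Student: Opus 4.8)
The plan is to reduce the statement directly to Exercise \ref{ass gr comm}, which asserts that $[D^i, D^j] \in Diff^{i+j-1}(R)$ whenever $D^i \in Diff^i(R)$ and $D^j \in Diff^j(R)$. First I would pin down the multiplication on the associated graded algebra: it is the one induced by composition of operators. The text has already shown that $D^i \circ D^j \in Diff^{i+j}(R)$, so composition descends to a pairing
$$\frac{Diff^i(R)}{Diff^{i-1}(R)} \otimes \frac{Diff^j(R)}{Diff^{j-1}(R)} \longrightarrow \frac{Diff^{i+j}(R)}{Diff^{i+j-1}(R)}.$$
Before invoking this I would verify that the induced product is well-defined, i.e.\ independent of the chosen representatives: replacing $D^i$ by $D^i + E$ with $E \in Diff^{i-1}(R)$ alters $D^i \circ D^j$ by $E \circ D^j \in Diff^{i+j-1}(R)$, which is zero in the target quotient, and symmetrically for $D^j$.

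For commutativity itself I would work with homogeneous elements. Writing $\overline{D}$ for the image of $D$ in the appropriate graded piece, the discrepancy between the two orders of multiplication is
$$\overline{D^i} \cdot \overline{D^j} - \overline{D^j} \cdot \overline{D^i} = \overline{D^i \circ D^j - D^j \circ D^i} = \overline{[D^i, D^j]}.$$
By Exercise \ref{ass gr comm} the operator $[D^i, D^j]$ lies in $Diff^{i+j-1}(R)$, so its class in $Diff^{i+j}(R)/Diff^{i+j-1}(R)$ vanishes. Hence homogeneous elements commute; since every element of the associated graded algebra is a finite sum of homogeneous components and the commutator is bilinear, the vanishing propagates term by term and commutativity extends to all elements.

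The sheaf statement would follow formally. All of the above takes place inside $Diff^{\bullet}(\Rsheaf(U))$ for each open $U \subset X$; the filtered pieces and their quotients are computed section-wise, and Exercise \ref{ass gr comm} applies verbatim to the algebra $\Rsheaf(U)$. Thus $\bigoplus_{i \geq 0} Diff^i_{\Rsheaf}/Diff^{i-1}_{\Rsheaf}$ is commutative on every open set, hence as a sheaf of algebras. I do not anticipate a genuine obstacle here: the entire substance of the proposition is already packaged in Exercise \ref{ass gr comm}, and the only point demanding care is the routine check that the graded multiplication is well-defined modulo the lower pieces of the filtration.
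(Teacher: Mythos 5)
Your proposal is correct and follows essentially the same route as the paper: both reduce commutativity of the associated graded algebra to Exercise \ref{ass gr comm}, which places $[D^i, D^j]$ in $Diff^{i+j-1}(R)$ so that $D^i \circ D^j \equiv D^j \circ D^i$ modulo the lower filtration piece. Your additional verifications that the graded product is well-defined on representatives and that the sheaf case reduces to the section-wise statement are routine details the paper leaves implicit.
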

\begin{proof}
By Exercise \ref{ass gr comm}, for $D^i \in Diff^i(R), D^j \in Diff^j(R)$, we have $$[D^i, D^j] = D^i \circ D^j - D^j \circ D^i \in Diff^{i+j-1}(R).$$Hence as elements of $Diff^{i+j}(R)$ $$D^i\circ D^j \equiv D^j \circ D^i \quad \mod Diff^{i+j-1}(R).$$
\end{proof}

\begin{definition}
The associated graded algebra $$ Symb^* (Diff^{\bullet}(R)):=\oplus_{i \geq 0} Symb^i :=gr(Diff^{\bullet}(R)) = \oplus_{i \geq 0}Diff^i(R)/Diff^{i-1}(R)$$ is called the {\bf algebra of symbols of differential operators over} $\bm{R}$. Note that $S^0 = Diff^0(R)$ is canonically isomorphic to $R$, so $Symb (Diff^{\bullet}(R))$ is a (commutative) $R$-algebra.

We can similarly define a {\bf sheaf of (commutative) $\bm{\mathcal{R}}$-algebras of symbols of differential operators over $\bm{\mathcal{R}}$}:  $$ Symb^* (Diff^{\bullet}_\Rsheaf):=\oplus_{i \geq 0} Symb^i :=gr(Diff^{\bullet}_\Rsheaf) = \oplus_{i \geq 0}Diff^{i}_\Rsheaf/Diff^{i-1}_\Rsheaf.$$
\end{definition}

Here

\begin{example}\label{symb 1}
Let us show that $Symb^1 = Diff^1(R)/Diff^0(R)$ is isomorphic to the $R$-module $Der(R)$ of derivations of $R$. 
\end{example}

\begin{proof}
In Example \ref{derivations} we showed that every derivation is also a differential operator of order $1$. This way we get a morphism $$Der(R) \inj{} Diff^1(R) \surj Diff^1(R)/Diff^0(R).$$

The inverse morphism is defined by $$  Diff^1(R) \ni D  \to D - m_{D(1)} \in Der(R).$$ One easily checks that $D-m_{D(1)}$ is indeed a derivation. Note that for $D=m_a \in  Diff^0(R)$ the difference $D - m_{D(1)} = m_a - m_{a \cdot 1} = 0$, so the inverse morphism is in fact defined as a map $$Diff^1(R)/ Diff^0(R) \to Der(R).$$  
\end{proof}

\begin{proposition}\label{key homo}
There is a natural homomorphism of graded commutative $R$-algebras $$Sym^*_R(Der(R)) = \oplus_{i \geq 0}Sym^i_R(Der(R)) \to \oplus_{i \geq 0} Symb^i = Symb^*(Diff^{\bullet}(R))$$that extends the isomorphism $Der(R) \stackrel{\iso}{\to} Symb^1$, 
and an analogous natural homomorphism of sheaves of graded commutative $\Rsheaf$-algebras
$$Sym^*(Der_{\Rsheaf}) \to Symb^*(Diff^{\bullet}_{\Rsheaf}).$$
\end{proposition}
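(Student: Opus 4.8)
The plan is to produce the map from the universal property of the symmetric algebra, using that graded commutativity of the target is precisely what makes the assignment well defined. First I would collect the ingredients already in hand. By Proposition \ref{commutative} the target $Symb^*(Diff^{\bullet}(R))$ is a graded commutative $R$-algebra whose degree-zero piece $Symb^0 = Diff^0(R)$ is canonically $R$. By Example \ref{symb 1} there is a canonical isomorphism of $R$-modules $\varphi\colon Der(R) \stackrel{\iso}{\to} Symb^1$, and composing with the inclusion $Symb^1 \inj{} Symb^*(Diff^{\bullet}(R))$ gives an $R$-linear map from the module $Der(R)$ into the degree-one component of a graded commutative $R$-algebra.

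Next I would invoke the universal property: over any commutative base ring, $Sym^*_R(M)$ is the free graded commutative $R$-algebra on the module $M$ placed in degree one, so every $R$-linear map from $M$ into the degree-one part of a graded commutative $R$-algebra $A$ extends uniquely to a homomorphism $Sym^*_R(M) \to A$ of graded commutative $R$-algebras. Applying this to $\varphi$ yields the desired homomorphism, which by construction restricts to $\varphi$ in degree one. Concretely it sends a product $X_1 \cdots X_i$ of derivations to the class $[X_1 \circ \cdots \circ X_i]$ of the composite operator in $Symb^i = Diff^i(R)/Diff^{i-1}(R)$; this composite does lie in $Diff^i(R)$ by the filtered-algebra structure established before Exercise \ref{ideals}.

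If I wanted to avoid quoting the universal property as a black box, I would instead check directly that $(X_1, \dots, X_i) \mapsto [X_1 \circ \cdots \circ X_i]$ is $R$-multilinear and symmetric, which is where the genuine content sits. Symmetry is immediate from Proposition \ref{commutative}, since $X_a \circ X_b \equiv X_b \circ X_a$ modulo $Diff^{i-1}(R)$. For $R$-linearity in a slot, the identity $X \circ m_f = m_f \circ X + m_{X(f)}$ (a restatement of $[X, m_f] = m_{X(f)}$ from Example \ref{derivations}) lets me move a scalar $f$ leftward past each composed derivation at the cost of a strictly lower-order operator, which dies in the associated graded; hence $[X_1 \circ \cdots \circ (fX_j) \circ \cdots \circ X_i] = f\,[X_1 \circ \cdots \circ X_i]$ for every $j$. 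These two facts show the assignment factors through $Sym^i_R(Der(R))$ and is $R$-linear, and multiplicativity across degrees is then clear from how the product is defined on each side.

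For the sheaf statement I would run the same construction on each open $U \subset X$, obtaining homomorphisms $Sym^*_{\Rsheaf(U)}(Der_{\Rsheaf}(U)) \to Symb^*(Diff^{\bullet}_{\Rsheaf}(U))$. The explicit formula $X_1 \cdots X_i \mapsto [X_1 \circ \cdots \circ X_i]$ makes compatibility with the restriction maps transparent, so these assemble into a morphism of presheaves of graded commutative $\Rsheaf$-algebras; sheafifying the source (the sheaf $Sym^*(Der_{\Rsheaf})$ being by definition the sheafification of $U \mapsto Sym^*(Der_{\Rsheaf}(U))$) yields the claimed morphism of sheaves. The main point to be careful about is exactly this last step, namely checking naturality in $R$ so the local maps glue, together with making sure the universal property is applied over an arbitrary base ring rather than a field; neither is a real obstacle, and the only substantive computation is the one-slot $R$-linearity above.
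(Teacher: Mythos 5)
Your proposal is correct and follows the same route as the paper: the paper's proof is precisely the application of the universal property of the symmetric algebra to the isomorphism $Der(R)\stackrel{\iso}{\to} Symb^1$ of Example \ref{symb 1}, composed with the inclusion $Symb^1 \inj{} Symb^*(Diff^{\bullet}(R))$, with Proposition \ref{commutative} supplying the commutativity of the target. Your additional unpacking (the explicit formula $X_1\cdots X_i \mapsto [X_1\circ\cdots\circ X_i]$, the one-slot $R$-linearity check, and the sheaf-gluing remark) goes beyond what the paper writes but is consistent with it.
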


\begin{proof}
Consider the following diagram of solid arrows:

\begin{center}\begin{tikzcd}
Sym^*(Der(R)) \arrow[r, dashed]  &  Symb^*(Diff^{\bullet}(R))  \\
Der(R) \arrow[u, hook] \arrow[r, "\iso"]          & Symb^1 \arrow[u, hook].
\end{tikzcd} \end{center}
By Proposition \ref{commutative} the target algebra  $Symb^*(Diff^{\bullet}(R))$ is commutative.
The upper horizontal dashed arrow is then just the  universal property of the symmetric algebra applied to the bottom isomorphism of Example \ref{symb 1} followed by the inclusion ${Symb^1 \inj{} Symb^*(Diff^{\bullet}(R))}$ on the right of the commutative diagram. 
\end{proof}

\section{Algebra of symbols of differential operators over the ring of smooth functions on a manifold}

In this section we will specialize the results of \S \ref{symbols section} on the symbols of differential operators to the setting of differential operators on a smooth manifold $M$. 

Consider the following objects:
\begin{itemize}
\item the ring $R=C^{\infty}(M)$ of smooth functions on a smooth manifold $M$, 
\item the sheaf $\Rsheaf=C^{\infty}_M$ of smooth functions on $M$ (so that $R=C^{\infty}(M)=\Gamma(M, C^{\infty}_M)=\Gamma(M, \Rsheaf)$), 
\item the algebra $Diff^{\bullet}(M):=Diff^{\bullet}(C^{\infty}(M))$ of global differential operators on $M$, 

\item the sheaf of algebras $Diff^{\bullet}_M:=Diff^{\bullet}_{C^{\infty}_M}$ of differential operators on $M$ in the sense of  Definition \ref{sheaf def} (so that ${Diff^{\bullet}(M) = \Gamma(M, Diff^{\bullet}_M))}$.
\end{itemize}

While the similarity of notation for the global and local objects might appear slightly confusing at first, the reader will soon find that working with sheafified objects is almost identical to working with global objects. This justifies our choice of notation.

The well known equivalence between vector fields and derivations of the sheaf of smooth functions $C^{\infty}_M$ yields an isomorphism of sheaves
$$Der(C^{\infty}_M) \iso TM,$$where we identify the tangent bundle $TM$ with the locally free sheaf of its sections (vector fields). In this setting, the morphism of Proposition \ref{key homo} becomes $$Sym^*(TM) \to Symb^*(Diff^{\bullet}_M).$$

The first goal of this section will be to prove the following key theorem.

\begin{theorem} Let $M$ be a smooth manifold. The natural homomorphism of sheaves
$$Sym^*(TM) \to Symb^*(Diff^{\bullet}_M)$$ is an isomorphism.
\end{theorem}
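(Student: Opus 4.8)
The plan is to reduce the statement to a purely local computation on $\RR^n$ and then verify bijectivity degree by degree, using the explicit generators of $Diff^{\bullet}(C^{\infty}(\RR^n))$ furnished by Proposition \ref{classic}. First I would observe that the assertion is local: a morphism of sheaves is an isomorphism as soon as it induces a bijection on the sections over every member of a basis for the topology of $M$. The coordinate charts $\phi\colon U \xrightarrow{\iso} \RR^n$ form such a basis, and transport of structure along $\phi$ identifies $C^{\infty}(U)$ with $C^{\infty}(\RR^n)$, the tangent sheaf $TU$ with the free module on $\partial/\partial x_1, \dots, \partial/\partial x_n$, and the whole morphism with the corresponding map for $\RR^n$. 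Hence it suffices to show that $Sym^*(TM) \to Symb^*(Diff^{\bullet}_M)$ is a bijection on sections over $\RR^n$. Since the map is a morphism of graded $C^{\infty}$-algebras, I would check bijectivity in each fixed degree $i$ separately. Here $Sym^i$ is free over $C^{\infty}(\RR^n)$ on the monomials $\xi^I := \xi_1^{i_1}\cdots\xi_n^{i_n}$ with $|I| = i$, where $\xi_j$ denotes the class of $\partial/\partial x_j$ under the isomorphism $Der \iso Symb^1$ of Example \ref{symb 1}, and this monomial is sent to the class $[\partial^I]$ of $(\partial/\partial x_1)^{i_1}\circ\cdots\circ(\partial/\partial x_n)^{i_n}$ in $Symb^i = Diff^i/Diff^{i-1}$.

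Surjectivity is immediate from Proposition \ref{classic}: every differential operator of order $\le i$ is a $C^{\infty}(\RR^n)$-linear combination of compositions of at most $i$ of the $\partial/\partial x_j$, i.e. of the $\partial^I$ with $|I| \le i$; passing to $Diff^i/Diff^{i-1}$ kills all terms with $|I| < i$, so $Symb^i$ is spanned over $C^{\infty}(\RR^n)$ by the classes $[\partial^I]$ with $|I| = i$, which are exactly the images of the $\xi^I$.

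The real content, and what I expect to be the main obstacle, is injectivity: I must show the classes $[\partial^I]$ with $|I| = i$ are linearly independent over $C^{\infty}(\RR^n)$, i.e. that if $P = \sum_{|I|=i} f_I\, \partial^I$ lies in $Diff^{i-1}(C^{\infty}(\RR^n))$ then every $f_I$ vanishes. The idea is to probe $P$ with suitable test functions. Fix a point $p$ and a multi-index $J$ with $|J| = i$, and apply $P$ to the function $(x - p)^J := (x_1 - p_1)^{j_1}\cdots(x_n - p_n)^{j_n}$. A direct computation gives $\partial^I[(x-p)^J]\big|_{x=p} = I!\,\delta_{IJ}$ for $|I| = |J| = i$, so that $P[(x-p)^J]\big|_{p} = J!\, f_J(p)$. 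On the other hand $(x-p)^J$ lies in $m_p^{\,i}$, the $i$-th power of the maximal ideal $m_p$ of functions vanishing at $p$, and by Exercise \ref{ideals}(b) an operator $P$ of order $\le i-1$ satisfies $P(m_p^{\,i}) \subset m_p$; thus $P[(x-p)^J]$ vanishes at $p$. Comparing the two evaluations yields $f_J(p) = 0$, and since $p$ and $J$ are arbitrary, all $f_I \equiv 0$. This is precisely the local vanishing principle behind Lemma \ref{lemma vanish}, and it is the step where the Grothendieck definition of order has to be used in an essential way.
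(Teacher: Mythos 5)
Your argument is correct and follows exactly the route the paper sets up: reduction to coordinate charts, surjectivity of each $Sym^i(T\RR^n)\to Symb^i$ via Proposition \ref{classic}, and injectivity by probing a putative relation $\sum_{|I|=i} f_I\,\partial^I\in Diff^{i-1}$ against the test functions $(x-p)^J\in m_p^{\,i}$ and invoking Exercise \ref{ideals}(b) --- the same vanishing principle that drives Lemma \ref{lemma vanish}. (The paper only announces its proof as a forthcoming ``series of lemmas,'' but all of its preparatory results point to precisely the argument you give.)
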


An immediate corollary of this theorem is that the sheaf of symbols of differential operators on a manifold is \emph{locally free}. We will prove the above theorem as a series of lemmas.

\newpage
\addcontentsline{toc}{chapter}{Bibliography}
\bibliographystyle{alpha}

\end{document}